% SIAM Article Template
\documentclass[onefignum,onetabnum]{siamart171218}
\usepackage{amsmath,amssymb}
\usepackage{color}
% Information that is shared between the article and the supplement
% (title and author information, macros, packages, etc.) goes into
% ex_shared.tex. If there is no supplement, this file can be included
% directly.
\newcommand{\trx}{^{\mbox{\tiny\sf\bfseries T}}}

\usepackage{verbatim}
\headers{Mean-square contractivity of stochastic $\theta$-methods}{R. D'Ambrosio, S. Di Giovacchino}

\title{Mean-square contractivity of stochastic $\theta$-methods\thanks{The authors are member of the INdAM Research group GNCS. The work is supported by GNCS-Indam project and by PRIN2017-MIUR project.}}

\author{Raffaele D'Ambrosio\thanks{Department of Engineering and Computer Science and Mathematics, University of L'Aquila, Italy
  (\email{raffaele.dambrosio@univaq.it}).}
\and  Stefano Di Giovacchino\thanks{Department of Engineering and Computer Science and Mathematics, University of L'Aquila, Italy
  (\email{stefano.digiovacchino@graduate.univaq.it}).}}

% The next statement enables references to information in the
% supplement. See the xr-hyperref package for details.
\begin{document}

\maketitle

% REQUIRED
\begin{abstract}
The paper is focused on the nonlinear stability analysis of stochastic $\theta$-methods. In particular, we consider nonlinear stochastic differential equations such that the mean-square deviation between two solutions exponentially decays, i.e., a mean-square contractive behaviour is visible along the stochastic dynamics. We aim to make the same property visible also along the numerical dynamics generated by stochastic $\theta$-methods: this issue is translated into sharp stepsize restrictions depending on parameters of the problem, here accurately estimated. A selection of numerical tests confirming the effectiveness of the analysis and its sharpness is also provided.
\end{abstract}

% REQUIRED
\begin{keywords}
Stochastic differential equations, stochastic theta-methods, exponential mean-square contractivity.
\end{keywords}

% REQUIRED
\begin{AMS}
  65C30, 65L07, 60H10.
\end{AMS}

\setcounter{equation}{0}

\section{Introduction}
We consider a nonlinear system of stochastic differential equations (SDEs) of It\^{o} type, assuming the form
\begin{equation}\label{equation}
\left\{\begin{aligned}
dX(t)&= f(X(t))dt + g(X(t))dW(t), \quad t \in [0,T],\\
X(0)&= X_0,
\end{aligned}\right.
\end{equation}
where $f: \mathbb{R}^n \rightarrow \mathbb{R}^n$, $g : \mathbb{R}^n \rightarrow \mathbb{R}^{n \times m}$ and
$W(t)$ is an $m$-dimensional Wiener process. For theoretical results on the existence and uniqueness of solutions to \eqref{equation}, we refer to the monograph \cite{Kloeden1992}. Moreover, in the sequel, we assume that \eqref{equation} is commutative.

We focus our attention on providing a nonlinear stability analysis to the general classes of stochastic $\theta$-methods for \eqref{equation} that,  with reference to the discretized domain $\mathcal{I}_{\Delta t}=\{t_n=n\Delta T, \ n=0,1,\ldots,N, \ N=T/\Delta t\}$, assume the following forms: 

\begin{align}
    X_{n+1} &= X_n + (1-\theta)\Delta t f (X_n) +  \theta \Delta t f(X_{n+1}) + g(X_n)\Delta W_n,\label{teta}\\
    X_{n+1} &= X_n + (1-\theta)\Delta t f (X_n) + \theta \Delta t f(X_{n+1}) + \sum_{j=1}^{m} g^j(X_n)\Delta W_n^j \label{teta_m}\\
    & + \frac{1}{2} \sum_{j=1}^{m } L^j g^j(X_n)((\Delta W_n^j)^2 - \Delta t)+\frac{1}{2} \sum_{\substack{j_1,j_2=1\\j_1 \ne j_2}}^{m}L^{j_1}g^{j_2}(X_n)\Delta W_n^{j_1} \Delta W_n^{j_2}, \notag
    \end{align}
where $\theta \in [0,1]$, $X_n$ is the approximate value for $X(t_n)$, the discretized Wiener increment $\Delta W_n$ is distributed as a gaussian random variable with zero mean and variance $\Delta t$, the operator $L^j$ is defined as
\begin{equation}
\notag
L^j = \sum_{k=1}^{n} g^{k,j} \frac{\partial}{\partial x^k}, \ \ \ j=1,...,m,
\end{equation}
where $g^j(X_n)$ is the $j-$th column of the matrix $g(X_n)$ and $\Delta W_n^j$ the $j-$th element of vector $\Delta W_n$. 
We refer to \eqref{teta} as $\theta$-{\em Maruyama} method and to \eqref{teta_m} as $\theta$-{\em Milstein} method in its componentwise form.
We note that, if $m=1$, \eqref{teta_m} reduces to the form 
\begin{equation}
\notag
\begin{aligned}
    X_{n+1} = X_n &+ (1-\theta)\Delta t f (X_n) + \theta \Delta t f(X_{n+1}) +  g(X_n)\Delta W_n\\
    & + \frac{1}{2} g(X_n) g'(X_n)(\Delta W_n^2 - \Delta t).
      \end{aligned}
\end{equation}

The stability analysis of $\vartheta$-methods has been given in \cite{buck11,r4} with respect to linear test problems, both scalar and vector-valued. The investigation led to conditions according to which the mean-square and asymptotic behaviours of the solutions to such linear problems are inherited also along the discretized counterpart provided by above $\vartheta$-methods. 

This paper is instead focused on providing a nonlinear stability analysis for $\vartheta$-methods \eqref{teta} and \eqref{teta_m}, with the aim to numerically inherit relevant properties of nonlinear problems along their discretizations. The discussion is motivated by some contributions on the so-called exponential stability properties of nonlinear SDEs, contained in \cite{r6,r7} and here briefly summarized in the following result.
\begin{theorem}\label{thExp}
For a given nonlinear SDE \eqref{equation}, let us assume the following properties for the drift $f$ and the diffusion $g$, by denoting with $|\cdot|$ both the Euclidean norm in $\mathbb{R}^n$ and the trace (or Frobenius) norm in $\mathbb{R}^{n\times m}$:
\begin{enumerate}
\item[(i)] $f, g \in \mathcal{C}^1(\mathbb{R}^n)$;
\item[(ii)] f satisfies a one-side Lipschitz condition, i.e. there exists $\mu \in \mathbb{R}$ such that
\begin{equation}\label{onesided}
<x-y, f(x)-f(y)> \le \mu \left| x-y \right|^2 , \ \ \ \forall x,y \in \mathbb{R}^n;
\end{equation}
\item[(iii)] g is a globally Lipschitz function, i.e. there exists $L > 0$ such that
\begin{equation}
\left| g(x)-g(y) \right|^2 \le L \left| x- y \right|^2 \ \ \ \forall x,y \in \mathbb{R}^n.
\end{equation}
\end{enumerate}
Then, any two solutions $X(t)$ and $Y(t)$ of (\ref{equation}), with $\mathbb{E}\left|X_0 \right|^2 < \infty$ and
$\mathbb{E}\left|Y_0 \right|^2 < \infty$,  satisfy
\begin{equation}
\label{exp}
\mathbb{E}\left|X(t) - Y(t)\right|^2 \le \mathbb{E}\left|X_0-Y_0 \right|^2 e^{\alpha t} , 
\end{equation}
where $\alpha = 2\mu + L$.
\end{theorem}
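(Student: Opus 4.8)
The plan is to apply It\^o's formula to the squared Euclidean norm of the difference process and then use the two structural hypotheses to close a differential inequality for the second moment. First I would set $Z(t)=X(t)-Y(t)$; subtracting the two copies of \eqref{equation} shows that $Z$ solves
\[
dZ(t)=\bigl(f(X(t))-f(Y(t))\bigr)\,dt+\bigl(g(X(t))-g(Y(t))\bigr)\,dW(t).
\]
Applying It\^o's formula to $V(z)=|z|^2$, for which the gradient is $2z$ and the Hessian equals $2I$, yields
\[
d|Z(t)|^2=\Bigl[2\langle Z,\,f(X)-f(Y)\rangle+|g(X)-g(Y)|^2\Bigr]dt+2\langle Z,\,(g(X)-g(Y))\,dW\rangle,
\]
where the diffusion contribution $|g(X)-g(Y)|^2$ (in the Frobenius norm) arises precisely from the trace term $\mathrm{tr}\bigl[(g(X)-g(Y))(g(X)-g(Y))\trx\bigr]$ in It\^o's formula.

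Next I would take expectations so as to remove the stochastic integral. Using the one-sided Lipschitz condition \eqref{onesided} to bound the drift contribution and the global Lipschitz property (iii) to bound the diffusion contribution, one obtains
\[
\frac{d}{dt}\,\mathbb{E}|Z(t)|^2\le \mathbb{E}\bigl[2\mu|Z|^2+L|Z|^2\bigr]=(2\mu+L)\,\mathbb{E}|Z(t)|^2=\alpha\,\mathbb{E}|Z(t)|^2.
\]
A direct application of Gr\"onwall's inequality to the nonnegative function $t\mapsto\mathbb{E}|Z(t)|^2$ then delivers $\mathbb{E}|Z(t)|^2\le \mathbb{E}|Z(0)|^2\,e^{\alpha t}$, which is exactly \eqref{exp} since $Z(0)=X_0-Y_0$. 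The only quantitative content is the clean additive split $\alpha=2\mu+L$ between the drift part ($2\mu$, from hypothesis (ii)) and the diffusion part ($L$, from hypothesis (iii)).

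The hard part will be the rigorous justification that the It\^o integral $2\int_0^t\langle Z,(g(X)-g(Y))\,dW\rangle$ has vanishing expectation, since a priori it is only a local martingale. I would handle this by a standard localization: introduce the stopping times $\tau_k=\inf\{t\ge 0:|Z(t)|\ge k\}$, derive the integral form of the inequality on $[0,t\wedge\tau_k]$, where the stopped stochastic integral is a genuine zero-mean martingale, and then let $k\to\infty$ via Fatou's lemma. The finiteness of the relevant moments needed to pass to the limit is guaranteed by $\mathbb{E}|X_0|^2,\mathbb{E}|Y_0|^2<\infty$ together with the Lipschitz-type bounds and the $\mathcal{C}^1$ regularity in hypothesis (i); everything else is a routine application of the assumptions.
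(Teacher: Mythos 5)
The paper states Theorem \ref{thExp} without proof, presenting it as a summary of results from \cite{r6,r7}, so there is no internal argument to compare against; your proof is precisely the standard one given in those references (It\^o's formula applied to $|X(t)-Y(t)|^2$, the one-sided Lipschitz bound on the drift pairing, the Frobenius-norm Lipschitz bound on the trace term, localization of the stochastic integral, and Gr\"onwall), and it is correct. One small point worth making explicit: since $\alpha=2\mu+L$ may be negative, you should invoke the differential form of Gr\"onwall (equivalently, note that $e^{-\alpha t}\,\mathbb{E}|Z(t)|^2$ is nonincreasing) rather than the integral-form lemma, whose usual statement assumes a nonnegative coefficient.
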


The inequality \eqref{exp} is denoted as {\em exponential mean-square stability inequality} for \eqref{equation}. An eventual negative sign on the parameter $\alpha$ appearing in the stability inequality \eqref{exp} allows to infer an exponential decay of the mean-square deviation between two solutions of a given SDE \eqref{equation}. Motivated by an analog property of deterministic differential equations (see, for instance, \cite{hawa} and references therein) we then introduce the following definition. 

\begin{definition}\label{def1}
A nonlinear SDE (\ref{equation}) whose solutions satisfy the exponential stability inequality (\ref{exp}) with $\alpha < 0$ is said to generate exponential mean-square contractive solutions.
\end{definition}

We observe that, when $g$ is identically zero in \eqref{equation}, Definition \ref{def1} recovers the deterministic condition $\mu<0$ that guarantees the contractive behaviour of the solutions to the corresponding deterministic problem. The discretization of deterministic differential equations with one-sided Lipschitz vector field with negative one-sided Lipschitz constant led to the notion of G-stability of numerical methods, introduced by G. Dahlquist in \cite{dahl76}. 

Here we aim to provide the numerical counterpart of exponential mean-square contractivity, that is certainly a relevant property to be inherited also by the discretized problem, since it ensures a long-term damping of the error along the numerical solutions. In particular, we aim to prove that the stability inequality \eqref{exp} is translated into a restriction on the stepsize employed in the numerical discretization, here sharply estimated. The provided inequalities characterizing the numerical methods depend on parameters that are also accurately estimated, in order to make the corresponding restrictions on the stepsize fully computable. 

The paper is organized as follows: Section 2 briefly recalls the main results regarding the linear stability properties of stochastic $\theta$-methods; Section 3 provides exponential mean-square stability inequalities for the $\theta$-methods \eqref{teta} and \eqref{teta_m}, giving the numerical counterpart of \eqref{exp}; in Section 4 we give a notion of mean-square contractivity for the numerical solutions computed by \eqref{teta} and \eqref{teta_m}, which is here translated into stepsize restrictions depending on parameters which are here estimated; Section 5 shows the numerical evidence on a selection of nonlinear problems \eqref{equation}, confirming the sharpness of the provided estimates; some conclusions are presented in Section 6. 

\section{Linear stability of stochastic $\theta$-methods}
It is worth recalling the main results regarding the linear stability properties of stochastic $\theta$-methods \eqref{teta} and \eqref{teta_m}, according to \cite{buck11,r4}. Indeed, the stepsize restrictions we present in the next sections in order to ensure the conservation of the exponential mean-square contractivity along numerical solutions clearly have to be compatible with the linear stability properties of the corresponding method.  

The linear stability analysis for the discretization of SDEs \eqref{equation}, as well known for instance from \cite{r4,h01,r5}, 
is performed with respect to the linear scalar problem
\begin{equation}\label{linearEquation}
\left\{\begin{aligned}
dX(t)&= \lambda X(t)dt + \mu X(t)dW(t), \quad t \in [0,T],\\
X(0)&= X_0,
\end{aligned}\right.
\end{equation}
with $\lambda, \mu \in \mathbb{C}$. The following definition occurs (see, for instance \cite{r4,h01}).
\begin{definition} 
The solution $X(t)$ of (\ref{linearEquation}) is mean-square stable if 
$$\lim_{t \rightarrow \infty} \mathbb{E}\left| X(t) \right|^2 =0.$$
\end{definition}
As proved in \cite{r4,h01,r5}, the solution $X(t)$ to (\ref{linearEquation}) is mean-square stable if and only if
\begin{equation}
\textrm{Re}(\lambda) + \frac{1}{2}\left| \mu \right|^2 < 0.
\end{equation}

The numerical counterpart of above arguments is provided in the following definition \cite{r4,h01,r5}.
\begin{definition}
\label{meanSqSt}
The numerical solution $X_n$ of (\ref{linearEquation}) is mean-square stable if
$$\lim_{n\rightarrow \infty} \mathbb{E}\left| X_n \right|^2 =0.$$
\end{definition}
Correspondingly, according to \cite{r4}, the stochastic $\theta$-Maruyama method \eqref{teta} is mean-square stable if and only if
\begin{equation}\label{mean-squareCond}
\frac{\left| 1 + (1-\theta) \Delta t \lambda \right|^2 + \Delta t \left| \mu \right|^2}{\left| 1-\theta \Delta t \lambda \right|^2}<1.
\end{equation}

Let us provide an analogous condition for the $\theta$-Milstein method (\ref{teta_m}). To this purpose, we apply the method (\ref{teta_m}) to (\ref{linearEquation}), obtaining
\begin{equation}
\label{myE}
X_{n+1} = \bigg[ \frac{1- (1-\theta)\Delta t \lambda + \mu + \frac{1}{2} \mu^2 (\Delta W_n^2 - \Delta t) }{1-\theta \Delta t \lambda} \bigg] X_n.
\end{equation}
Squaring and passing to the expectation leads to
$$\mathbb{E} | X_{n+1} |^2 =\bigg[  \beta^2 + \frac{\beta \mu^2 \Delta t}{1-\theta \Delta t \lambda} + \frac{\mu^2 \Delta t + \frac{3}{4} \mu^4 \Delta t^2}
 {(1-\theta \Delta t \lambda)^2} \bigg] \mathbb{E} | X_n |^2,$$
with
$$\beta = \frac{1+ (1-\theta)\Delta t \lambda - \frac{1}{2}\mu^2 \Delta t}{1-\theta \Delta t \lambda}.$$
Therefore, according to Definition \ref{meanSqSt}, the $\theta$-Milstein method (\ref{teta_m}) is mean-square stable if and only if
\begin{equation}\label{teta_mMeanSquareSt}
 \bigg | \beta^2 + \frac{\beta \mu^2 \Delta t}{1-\theta \Delta t \lambda} + \frac{\mu^2 \Delta t + \frac{3}{4} \mu^4 \Delta t^2}
 {(1-\theta \Delta t \lambda)^2} \bigg | < 1.
\end{equation}

In the remainder, we check that all the values of $\theta$ and $\Delta t$ leading to an exponential mean-square contractive behaviour of the numerical solution to \eqref{equation} computed by \eqref{teta} or \eqref{teta_m} fulfill the constraints given by \eqref{mean-squareCond} and \eqref{teta_mMeanSquareSt}, respectively. 

\section{Exponential mean-square stability inequalities} We aim to provide the numerical counterpart of \eqref{exp}, i.e., we develop an analogous exponential mean-square stability inequality for the numerical discretization of \eqref{equation} with the $\theta$-methods \eqref{teta} and \eqref{teta_m}, under the assumptions of Theorem \ref{thExp} . The following technical lemma (see \cite{r7}) is useful in the remainder. 

\begin{lemma}\label{lem}
Under the assumptions (i)--(iii) given in Theorem \ref{thExp}, for any $h>0$ and $b_1, b_2 \in \mathbb{R}^n$ and , there exist unique $a_1, a_2 \in \mathbb{R}^n$ solutions of the implicit equations
$$a_i - h f(a_i) = b_i, \quad i = 1,2,$$
satisfying the inequality
\begin{equation}
\notag
(1-2h\mu) \left| a_1-a_2 \right|^2 \le \left| b_1-b_2 \right|^2.
\end{equation}
\end{lemma}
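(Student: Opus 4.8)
The plan is to separate the statement into two independent claims: the existence and uniqueness of the solutions $a_i$ of the implicit equations, and the quantitative estimate relating $|a_1-a_2|$ to $|b_1-b_2|$. I would prove the estimate first, because it simultaneously delivers uniqueness almost for free.

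For the estimate, I would subtract the two defining equations to get $b_1-b_2 = (a_1-a_2) - h\bigl(f(a_1)-f(a_2)\bigr)$ and expand the squared Euclidean norm,
\begin{equation}
\notag
|b_1-b_2|^2 = |a_1-a_2|^2 - 2h\langle a_1-a_2,\, f(a_1)-f(a_2)\rangle + h^2|f(a_1)-f(a_2)|^2.
\end{equation}
The one-sided Lipschitz condition (ii) bounds the cross term from below, namely $-2h\langle a_1-a_2, f(a_1)-f(a_2)\rangle \ge -2h\mu|a_1-a_2|^2$ since $h>0$; discarding the nonnegative term $h^2|f(a_1)-f(a_2)|^2$ then yields $(1-2h\mu)|a_1-a_2|^2 \le |b_1-b_2|^2$, which is exactly the asserted inequality. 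Uniqueness is now immediate: setting $b_1=b_2$ forces $(1-2h\mu)|a_1-a_2|^2\le 0$, so $a_1=a_2$ whenever $1-2h\mu>0$ --- automatic for every $h>0$ in the contractive regime $\mu<0$, and otherwise for $h<1/(2\mu)$.

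For existence I would study the map $\Phi(x)=x-hf(x)$ and show it is onto $\mathbb{R}^n$. The same inner-product computation gives the strong monotonicity $\langle \Phi(x)-\Phi(y),\,x-y\rangle \ge (1-h\mu)|x-y|^2$, and $\Phi$ is continuous by (i); strong monotonicity also yields coercivity, $\langle\Phi(x),x\rangle/|x|\to\infty$. A continuous, coercive, strictly monotone map of $\mathbb{R}^n$ into itself is a bijection by the classical surjectivity theorem for monotone operators (Browder--Minty), so each $b_i$ admits a unique preimage $a_i$.

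The main obstacle is precisely this existence step. It is tempting to reformulate the equation as the fixed point $x=b+hf(x)$, but under the present hypotheses $f$ is only one-sided Lipschitz and of class $\mathcal{C}^1$ --- not globally Lipschitz, that assumption being imposed on $g$ rather than on $f$ --- so the map $x\mapsto b+hf(x)$ need not be a contraction and the Banach fixed-point argument is unavailable. This is what forces the use of monotone-operator theory (or an equivalent degree/homotopy argument) rather than an elementary fixed-point iteration, whereas the contractivity estimate itself is a one-line consequence of expanding a single square.
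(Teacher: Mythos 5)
The paper does not prove this lemma at all: it is quoted as a known technical result with a pointer to the reference \cite{r7} (Higham, Mao and Stuart), so there is no in-paper argument to compare against. Your proof is correct and is essentially the standard one from that source: the contractivity estimate follows from expanding $|b_1-b_2|^2=|(a_1-a_2)-h(f(a_1)-f(a_2))|^2$, applying the one-sided Lipschitz bound to the cross term and discarding the nonnegative square, while existence comes from the Browder--Minty/uniform-monotonicity theorem applied to $\Phi(x)=x-hf(x)$, since $f$ is not assumed globally Lipschitz and a Banach fixed-point iteration is indeed unavailable. You also correctly flag the one real subtlety that the paper's statement glosses over: strong monotonicity of $\Phi$, and hence solvability ``for any $h>0$'', only holds when $1-h\mu>0$, which is automatic in the contractive regime $\mu<0$ that the paper actually uses (there $\alpha=2\mu+L<0$ forces $\mu<0$) but fails for general $\mu>0$ and large $h$, as the scalar example $f(x)=\mu x$ with $h\mu=1$ shows. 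In short, the proposal is a complete and correct proof of the cited lemma, slightly more careful than the statement itself.
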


\subsection{Exponential mean-square stability of $\theta$-Maruyama methods} 
The following result provides the counterpart of \eqref{exp} for the numerical discretization of \eqref{equation} with the $\theta$-Maruyama method \eqref{teta}.

\begin{theorem}\label{THM}
Under the assumptions (i)--(iii) given in Theorem \ref{thExp}, any two numerical solutions $X_n$ and $Y_n$, $n\geq 0$, computed by applying the $\theta$-Maruyama method \eqref{teta} to \eqref{equation} with initial values such that $\mathbb{E} \left| X_0 \right|^2 < \infty$ and $\mathbb{E} \left| Y_0 \right|^2 < \infty$, satisfy the inequality
\begin{equation}
\label{contCond}
\mathbb{E}\left| X_{n} - Y_{n} \right|^2 \le \mathbb{E}\left| X_{0} - Y_{0} \right|^2 \textrm{e}^ { \nu(\theta,\Delta t) t_n} ,
\end{equation}
where 
\begin{equation}\label{nu_mar}
\nu(\theta,\Delta t) = \frac{1}{\Delta t} \ln{\beta(\theta, \Delta t)} 
\end{equation}
and
\begin{equation}\label{beta_mar}
\beta(\theta,\Delta t) = 1+ \frac{\alpha + (1-\theta)^2 M \Delta t }{1 - 2\theta\mu \Delta t} \Delta t,
\end{equation}
with
\begin{equation}\label{M}
M=\sup_{t\in[0,T]}\mathbb{E}|f'(X(t))|^2.
\end{equation}
\end{theorem}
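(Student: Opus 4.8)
The plan is to track the mean-square deviation $Z_n = X_n - Y_n$ across a single step, extract a one-step amplification factor, and then iterate. First I would subtract the two instances of scheme \eqref{teta} for $X_n$ and $Y_n$ and isolate the implicit term: with $h=\theta\Delta t$ the updates read $X_{n+1}-hf(X_{n+1})=b_1$ and $Y_{n+1}-hf(Y_{n+1})=b_2$, where
$$b_1-b_2 = Z_n + (1-\theta)\Delta t\,\big(f(X_n)-f(Y_n)\big) + \big(g(X_n)-g(Y_n)\big)\Delta W_n.$$
Applying Lemma \ref{lem} with this $h$ and these $b_1,b_2$ then converts the implicit stage into the explicit estimate $(1-2\theta\mu\Delta t)\,|Z_{n+1}|^2 \le |b_1-b_2|^2$. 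This step presumes $1-2\theta\mu\Delta t>0$, which is the natural well-posedness condition for the implicit stage and is automatic when $\mu<0$.

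Next I would expand $|b_1-b_2|^2$ and pass to expectations. Since $\Delta W_n$ is independent of $\mathcal{F}_{t_n}$ with $\mathbb{E}[\Delta W_n]=0$ and $\mathbb{E}[\Delta W_n\Delta W_n\trx]=\Delta t\,I_m$, the cross term between the $\mathcal{F}_{t_n}$-measurable part $Z_n+(1-\theta)\Delta t(f(X_n)-f(Y_n))$ and $(g(X_n)-g(Y_n))\Delta W_n$ drops out under the tower property, while the diffusion term contributes exactly $\Delta t\,\mathbb{E}|g(X_n)-g(Y_n)|^2$ in Frobenius norm. This gives
$$(1-2\theta\mu\Delta t)\,\mathbb{E}|Z_{n+1}|^2 \le \mathbb{E}\big|Z_n+(1-\theta)\Delta t(f(X_n)-f(Y_n))\big|^2 + \Delta t\,\mathbb{E}|g(X_n)-g(Y_n)|^2.$$

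I would then estimate the three resulting pieces separately. Expanding the first square yields $\mathbb{E}|Z_n|^2$, a cross term $2(1-\theta)\Delta t\,\mathbb{E}\langle Z_n, f(X_n)-f(Y_n)\rangle$, and a quadratic drift term $(1-\theta)^2\Delta t^2\,\mathbb{E}|f(X_n)-f(Y_n)|^2$. The one-sided Lipschitz condition \eqref{onesided} bounds the cross term (multiplied by the nonnegative factor $2(1-\theta)\Delta t$) by $2(1-\theta)\mu\Delta t\,\mathbb{E}|Z_n|^2$; the global Lipschitz hypothesis (iii) bounds the diffusion term by $L\Delta t\,\mathbb{E}|Z_n|^2$; and the $\mathcal{C}^1$ hypothesis (i), through the integral mean-value identity $f(X_n)-f(Y_n)=\big(\int_0^1 f'(Y_n+sZ_n)\,ds\big)Z_n$ and the definition \eqref{M} of $M$, bounds the quadratic drift term by $(1-\theta)^2 M\Delta t^2\,\mathbb{E}|Z_n|^2$. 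Collecting the four contributions leads to
$$(1-2\theta\mu\Delta t)\,\mathbb{E}|Z_{n+1}|^2 \le \big[\,1+2(1-\theta)\mu\Delta t + L\Delta t + (1-\theta)^2 M\Delta t^2\,\big]\,\mathbb{E}|Z_n|^2.$$

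Finally, I would recognize the bracketed factor as precisely $\beta(\theta,\Delta t)\,(1-2\theta\mu\Delta t)$, using $\alpha=2\mu+L$; dividing by $1-2\theta\mu\Delta t>0$ yields the clean recursion $\mathbb{E}|Z_{n+1}|^2\le\beta(\theta,\Delta t)\,\mathbb{E}|Z_n|^2$, and iterating from $n=0$ with $\beta^n=\exp(n\ln\beta)=\exp(\nu(\theta,\Delta t)t_n)$, where $n=t_n/\Delta t$, produces \eqref{contCond}. The step I expect to be the main obstacle is the quadratic drift estimate: the constant $M$ in \eqref{M} is a supremum of $\mathbb{E}|f'|^2$ taken along the \emph{exact} solution, so justifying $\mathbb{E}|f(X_n)-f(Y_n)|^2\le M\,\mathbb{E}|Z_n|^2$ for the \emph{numerical} iterates requires care in relating the averaged Jacobian $\int_0^1 f'(Y_n+sZ_n)\,ds$ to that bound. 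Everything else reduces to the one-sided and global Lipschitz hypotheses together with the elementary first and second moments of the Wiener increment.
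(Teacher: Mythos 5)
Your proposal follows essentially the same route as the paper's proof: apply Lemma \ref{lem} to the implicit stage, expand the square on the right-hand side, bound the cross term by the one-sided Lipschitz condition, the diffusion term by the global Lipschitz condition, and the quadratic drift term via $M$, then pass to expectations (killing the Wiener cross terms) and iterate the resulting recursion $\mathbb{E}|Z_{n+1}|^2\le\beta(\theta,\Delta t)\,\mathbb{E}|Z_n|^2$. If anything you are more explicit than the paper on the points it leaves implicit --- the positivity of $1-2\theta\mu\Delta t$, the martingale cancellation of the mixed terms, and the genuine subtlety that $M$ in \eqref{M} is defined along the exact solution while the bound is applied to the numerical iterates, a gap the paper's own proof silently passes over as well.
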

\begin{proof} 
Since $X_{n+1}$ and $Y_{n+1}$ satisfy the implicit equations
$$\begin{aligned}
X_{n+1} &= X_n + (1-\theta)\Delta t f (X_n) +  \theta \Delta t f(X_{n+1}) + g(X_n)\Delta W_n,\\
Y_{n+1} &= Y_n + (1-\theta)\Delta t f (Y_n) +  \theta \Delta t f(Y_{n+1}) + g(Y_n)\Delta W_n,
\end{aligned}$$
according to Lemma \ref{lem} we obtain
\begin{equation}\label{tmp1}
(1 - 2\theta\mu \Delta t) |X_{n+1} - Y_{n+1} |^2 \le \bigl|(X_n - Y_n) + (1 - \theta)\Delta t \Delta f_n + \Delta g_n\Delta W_n \bigr|^2,
\end{equation}
where $\Delta f_n =f(X_n)-f(Y_n)$ and $\Delta g_n=g(X_n)-g(Y_n)$. The right-hand side of \eqref{tmp1} is then bounded by
$$\begin{aligned}
&|X_n-Y_n|^2 + (1-\theta)^2 \Delta t ^2 |\Delta f_n|^2+ |\Delta g_n \Delta W_n|^2+ 2(1-\theta)\Delta t <X_n-Y_n,\Delta f_n>\\
&\qquad\qquad \ \ + 2<X_n-Y_n,\Delta g_n \Delta W_n>+ 2 (1-\theta)\Delta t <\Delta f_n,\Delta g_n \Delta W_n>.
\end{aligned}$$
By applying the assumptions (i)--(iii) of Theorem \ref{thExp}, we obtain
$$\begin{aligned}
(1 - 2\theta\mu \Delta t) |X_{n+1} - Y_{n+1} |^2 &\le (1+L |\Delta W_n|^2+ 2(1-\theta)\Delta t \mu)|X_n-Y_n|^2\\  
&+(1-\theta)^2 \Delta t ^2 |\Delta f_n|^2 + 2<X_n-Y_n,\Delta g_n \Delta W_n>\\
&+ 2 (1-\theta)\Delta t <\Delta f_n,\Delta g_n \Delta W_n>.
\end{aligned}$$
Passing to the expectations leads to
$$\mathbb{E}|X_{n+1} - Y_{n+1} |^2 \le \beta(\theta,\Delta t) \mathbb{E} \left| X_n - Y_n \right|^2.$$
Since
\begin{equation}
\notag
\mathbb{E}|X_{n+1} - Y_{n+1} |^2 \le\beta(\theta,\Delta t) ^{n+1} \mathbb{E}|X_{0} - Y_{0} |^2,
\end{equation}
the thesis holds true.
\end{proof}

According to Theorem \ref{THM}, the $\theta$-Maruyama method \eqref{teta} satisfies the exponential mean-square stability inequality \eqref{contCond} with argument $\nu(\theta,\Delta t)$ of the exponential given by \eqref{nu_mar}, when applied to the SDE \eqref{equation} satisfying the inequality \eqref{exp} with parameter $\alpha=2\mu+L$. Let us now provide an estimate for the error $|\nu(\theta,\Delta t)-\alpha|$.

\begin{theorem}\label{nuEst}
Under the same assumptions of Theorem \ref{THM}, for any fixed value of $\theta\in[0,1]$, we have
\begin{equation}
\label{alfaEst}
|\nu(\theta,\Delta t)-\alpha|=\mathcal{O}(\Delta t).
\end{equation}
\end{theorem}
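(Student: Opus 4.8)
The plan is to treat $\nu(\theta,\Delta t)$ as a function of the single variable $\Delta t$ (with $\theta$ fixed) and to carry out an asymptotic expansion about $\Delta t=0$, comparing the leading term with $\alpha$. First I would factor the quantity inside the logarithm so that its deviation from $1$ is made explicit: writing
$$
\beta(\theta,\Delta t)=1+c(\theta,\Delta t)\,\Delta t,\qquad
c(\theta,\Delta t)=\frac{\alpha+(1-\theta)^2 M\Delta t}{1-2\theta\mu\Delta t},
$$
one sees that $c(\theta,\Delta t)\to\alpha$ as $\Delta t\to0$. To make this quantitative I would restrict attention to stepsizes below a threshold depending only on $\theta$ and $\mu$, so that $1-2\theta\mu\Delta t$ stays bounded away from zero; on this range the denominator can be expanded as $\tfrac{1}{1-2\theta\mu\Delta t}=1+2\theta\mu\Delta t+\mathcal{O}(\Delta t^2)$, and multiplying out gives
$$
c(\theta,\Delta t)=\alpha+\bigl(2\theta\mu\,\alpha+(1-\theta)^2 M\bigr)\Delta t+\mathcal{O}(\Delta t^2)=\alpha+\mathcal{O}(\Delta t).
$$

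Next I would insert this into the definition of $\nu$ and use the Taylor expansion of the logarithm. Since $c(\theta,\Delta t)\,\Delta t\to0$, the standard estimate $\ln(1+x)=x+\mathcal{O}(x^2)$ applies with $x=c(\theta,\Delta t)\,\Delta t$, yielding
$$
\ln\beta(\theta,\Delta t)=c(\theta,\Delta t)\,\Delta t+\mathcal{O}(\Delta t^2).
$$
Dividing by $\Delta t$ in accordance with \eqref{nu_mar} then gives
$$
\nu(\theta,\Delta t)=\frac{1}{\Delta t}\ln\beta(\theta,\Delta t)=c(\theta,\Delta t)+\mathcal{O}(\Delta t)=\alpha+\mathcal{O}(\Delta t),
$$
and therefore $|\nu(\theta,\Delta t)-\alpha|=\mathcal{O}(\Delta t)$, which is exactly the claim \eqref{alfaEst}.

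The argument is essentially a controlled Taylor expansion, so no genuine difficulty arises; the only point requiring care—and the one I would regard as the crux—is the uniform validity of the two expansions for small $\Delta t$. Specifically, I would want to ensure that the denominator $1-2\theta\mu\Delta t$ is positive and bounded below (which fixes an admissible stepsize range, automatically compatible with the linear stability constraint \eqref{mean-squareCond}), and that the constants hidden in the $\mathcal{O}(\Delta t)$ terms depend only on the fixed data $\mu$, $L$, $M$ and $\theta$, so that the estimate is meaningful as $\Delta t\to0$. Once this bookkeeping is in place the conclusion follows directly.
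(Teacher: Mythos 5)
Your proposal is correct and follows essentially the same route as the paper: a Taylor expansion of $\nu(\theta,\Delta t)$ in powers of $\Delta t$, whose leading term is $\alpha$. The paper simply records the explicit first-order coefficient $2\alpha\mu\theta+(1-\theta)^2M-\tfrac{\alpha^2}{2}$ (the extra $-\tfrac{\alpha^2}{2}$ coming from the $-x^2/2$ term of the logarithm, which your $\mathcal{O}(\Delta t^2)$ absorbs), while you add the useful bookkeeping about keeping $1-2\theta\mu\Delta t$ bounded away from zero.
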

\begin{proof}
By expanding $\nu(\theta,\Delta t)$ in \eqref{nu_mar} in power series of $\Delta t$, we obtain
$$\nu (\theta,\Delta t) = \alpha + \left(2\alpha\mu\theta + (1-\theta)^2M - \frac{\alpha^2}{2} \right)\Delta t + O(\Delta t^2),$$
leading to the thesis.
\end{proof}

\subsection{Exponential mean-square stability of $\theta$-Milstein methods}
The following result is focused on the counterpart of \eqref{exp} for the numerical discretization of \eqref{equation} with the $\theta$-Milstein method \eqref{teta_m}.
\begin{theorem}\label{THM_m}
Under the assumptions (i)--(iii) given in Theorem \ref{thExp}, any two numerical solutions $X_n$ and $Y_n$, $n\geq 0$, computed by applying the $\theta$-Milstein method \eqref{teta_m} to \eqref{equation} with initial values such that $\mathbb{E} \left| X_0 \right|^2 < \infty$ and $\mathbb{E} \left| Y_0 \right|^2 < \infty$, satisfy the inequality
\begin{equation}
\label{contCond2}
\mathbb{E}\left| X_{n} - Y_{n} \right|^2 \le \mathbb{E}\left| X_{0} - Y_{0} \right|^2 e^ { \epsilon(\theta,\Delta t) t_n} ,
\end{equation}
where 
\begin{equation}\label{eps_mil}
\epsilon(\theta,\Delta t) = \frac{1}{\Delta t} \ln{\gamma(\theta, \Delta t)} 
\end{equation}
and
\begin{equation}\label{gamma_mil}
\gamma(\theta,\Delta t) =\beta(\theta,\Delta t) +\frac{3 \widetilde{M}\Delta t^2}{4(1 - 2\theta\mu \Delta t)},
\end{equation}
with $\widetilde{M}$ defined as
\begin{equation}\label{Mtilde}
\widetilde{M}=\sum_{i,j=1}^{m}\sum_{k,l=1}^{n} \widetilde{M}_{i,j}^{k,l},
\end{equation}
where
\begin{equation}
\notag
\widetilde{M}^{k,l}_{i,j} = \sup_{t\in[0,T]}\frac{\mathbb{E}\bigg( h_{i,j}^{k,l}(X(t),Y(t))\bigg)}{\mathbb{E}|X(t)-Y(t)|^2},
\end{equation}
being
\begin{equation}
\notag
\begin{aligned}
h^{k,l}_{i,j}(X(t),Y(t)) =&< g^{k,i}(X(t))\frac{\partial}{\partial x^k}g^j(X(t))-g^{k,i}(Y(t))\frac{\partial}{\partial y^k}g^j(Y(t)),\\
&g^{l,i}(X(t))\frac{\partial}{\partial x^l}g^j(X(t))-g^{l,i}(Y(t))\frac{\partial}{\partial y^l}g^j(Y(t))>, 
\end{aligned}
\end{equation}
$i,j=1,\ldots,m$, $k,l=1,\ldots,n$.
 \end{theorem}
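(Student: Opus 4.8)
The plan is to mirror the proof of Theorem \ref{THM} for the $\theta$-Maruyama method, treating the additional Milstein correction terms as a perturbation of the Maruyama estimate. Writing the scheme \eqref{teta_m} for $X_{n+1}$ and $Y_{n+1}$ and moving the implicit term $\theta\Delta t\, f(\cdot)$ to the left, I would apply Lemma \ref{lem} with $h=\theta\Delta t$ to obtain
\[
(1-2\theta\mu\Delta t)\,|X_{n+1}-Y_{n+1}|^2 \le |(X_n-Y_n)+(1-\theta)\Delta t\,\Delta f_n+\Delta D_n+\Delta C_{1,n}+\Delta C_{2,n}|^2,
\]
where $\Delta f_n=f(X_n)-f(Y_n)$, the first-order diffusion increment is $\Delta D_n=\sum_{j}\Delta g^j_n\,\Delta W_n^j$, and $\Delta C_{1,n}$, $\Delta C_{2,n}$ are the diagonal and mixed Milstein corrections, each assembled from the increments $\Delta(L^ig^j)_n=L^ig^j(X_n)-L^ig^j(Y_n)$.

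Next I would expand the right-hand side. The three terms $(X_n-Y_n)+(1-\theta)\Delta t\,\Delta f_n+\Delta D_n$ are exactly those appearing in the $\theta$-Maruyama estimate, so, invoking assumptions (i)--(iii) and the identity $\mathbb{E}|\Delta D_n|^2=\Delta t\,\mathbb{E}|\Delta g_n|^2$ just as in Theorem \ref{THM}, their expected contribution is bounded by $(1-2\theta\mu\Delta t)\,\beta(\theta,\Delta t)\,\mathbb{E}|X_n-Y_n|^2$. The crucial new step is to show that, after taking expectation, every cross term pairing a Milstein correction with a lower-order term drops out. This uses independence of $\Delta W_n$ from $X_n,Y_n$ together with the vanishing moments $\mathbb{E}[\Delta W_n^j]=0$, $\mathbb{E}[(\Delta W_n^j)^2-\Delta t]=0$, $\mathbb{E}[\Delta W_n^j((\Delta W_n^{j'})^2-\Delta t)]=0$, and $\mathbb{E}[\Delta W_n^j\Delta W_n^{j_1}\Delta W_n^{j_2}]=0$ for $j_1\ne j_2$; the same parity argument also kills $\mathbb{E}\langle\Delta C_{1,n},\Delta C_{2,n}\rangle$. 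Hence the only genuinely new contribution is $\mathbb{E}|\Delta C_{1,n}|^2+\mathbb{E}|\Delta C_{2,n}|^2$.

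I would then evaluate these two second moments using $\mathbb{E}[((\Delta W_n^j)^2-\Delta t)^2]=2\Delta t^2$ and $\mathbb{E}[(\Delta W_n^{j_1})^2(\Delta W_n^{j_2})^2]=\Delta t^2$ for $j_1\ne j_2$, where the commutativity hypothesis on \eqref{equation} is used to identify the mixed-index products $L^{j_1}g^{j_2}=L^{j_2}g^{j_1}$ arising in $\mathbb{E}|\Delta C_{2,n}|^2$. Expanding each squared norm over the spatial indices reproduces precisely the quantities $\sum_{k,l}h^{k,l}_{i,j}(X_n,Y_n)=|\Delta(L^ig^j)_n|^2$, so that after summing over $i,j=1,\dots,m$ and bounding via the definition \eqref{Mtilde} of $\widetilde M$, the combined correction is controlled by a multiple of $\widetilde M\Delta t^2\,\mathbb{E}|X_n-Y_n|^2$; tracking the fourth-moment constants produces the coefficient $\tfrac34$ appearing in \eqref{gamma_mil}, echoing the factor seen in the linear analysis \eqref{teta_mMeanSquareSt}. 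Dividing through by $(1-2\theta\mu\Delta t)$ and recombining with the Maruyama part yields the one-step contraction $\mathbb{E}|X_{n+1}-Y_{n+1}|^2\le\gamma(\theta,\Delta t)\,\mathbb{E}|X_n-Y_n|^2$. Iterating from $n=0$ gives $\mathbb{E}|X_n-Y_n|^2\le\gamma^n\,\mathbb{E}|X_0-Y_0|^2$, and rewriting $\gamma^n=e^{\epsilon(\theta,\Delta t)t_n}$ through $\epsilon=\Delta t^{-1}\ln\gamma$ as in \eqref{eps_mil} and $t_n=n\Delta t$ delivers \eqref{contCond2}.

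The main obstacle I anticipate is the combinatorial moment bookkeeping of the Milstein layer: correctly deciding which mixed products of Wiener increments survive in expectation across the fourfold index range $(i,j,k,l)$, and assembling $\mathbb{E}|\Delta C_{1,n}|^2+\mathbb{E}|\Delta C_{2,n}|^2$ into a single clean multiple of $\widetilde M$. A secondary, more technical point — shared with Theorem \ref{THM} through the constant $M$ — is the passage from the discrete differences $\Delta(L^ig^j)_n$ to the supremum $\widetilde M$ defined along the exact solutions $X(t),Y(t)$, which must be justified for \eqref{Mtilde} to provide a legitimate bound on the numerical quantity.
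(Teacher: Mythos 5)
Your proposal follows essentially the same route as the paper: the paper's proof applies Lemma \ref{lem} to absorb the implicit drift term, writes the resulting squared bound with the additional Milstein increment $\tfrac12\,\Delta h_n(\Delta W_n^2-\Delta t)$, and then states that one proceeds as in Theorem \ref{THM} for $m=1$ and generalizes to $m>1$ --- which is precisely the expansion, vanishing-cross-term, and second-moment bookkeeping you lay out in detail (including the legitimate concern, shared with the paper's own treatment of $M$ and $\widetilde M$, about these constants being defined along the exact rather than the numerical solutions). The only caveat is that a careful evaluation gives $\mathbb{E}\bigl[(\Delta W_n^2-\Delta t)^2\bigr]=2\Delta t^2$ and hence a coefficient $\tfrac12$ rather than $\tfrac34$ (the paper's $\tfrac34$ is a valid but non-sharp constant echoing the grouping used in the linear analysis), so your claim that the fourth-moment tracking produces exactly $\tfrac34$ should not be taken literally; since $\tfrac12\le\tfrac34$, this does not affect the validity of the stated inequality.
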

 \begin{proof}
 First, we show the proof for $m=1$. 
Since $X_{n+1}$ and $Y_{n+1}$ satisfy the implicit equations
$$\begin{aligned}
X_{n+1} &= X_{n} + (1-\theta)\Delta t f(X_n) + \theta \Delta t f(X_{n+1}) + g(X_n)\Delta W_n+ \frac{1}{2} h (X_n) (\Delta W_n^2-\Delta t),\\
Y_{n+1} &= Y_{n} + (1-\theta)\Delta t f(Y_n) + \theta \Delta t f(Y_{n+1}) + g(Y_n)\Delta W_n+ \frac{1}{2} h(Y_n) (\Delta W_n^2-\Delta t),
\end{aligned}$$
where $h(x) = g(x)g'(g),$
Lemma \ref{lem} leads to
$$\begin{aligned}
(1 - 2\theta\mu \Delta t) |X_{n+1} - Y_{n+1} |^2 \le \biggl|(X_n - Y_n) &+ (1 - \theta)\Delta t \Delta f_n+ \Delta g_n\Delta W_n \biggr. \\ 
& \left. +\frac12 \Delta h_n(\Delta W_n^2-\Delta t)\right|^2.
 \end{aligned}$$
By proceeding as in Theorem \ref{THM}, the thesis holds true for $m=1$. The general case $m>1$ holds true as direct generalization of the previous one.\\
\end{proof}

According to Theorem \ref{THM_m}, the $\theta$-Milstein method \eqref{teta_m} satisfies the exponential mean-square stability inequality \eqref{contCond} with argument $\epsilon(\theta,\Delta t)$ of the exponential given by \eqref{eps_mil}, when applied to the SDE \eqref{equation} satisfying the inequality \eqref{exp} with parameter $\alpha=2\mu+L$. As in the previous section, let us now provide an estimate for the error $|\epsilon(\theta,\Delta t)-\alpha|$.

\begin{theorem}\label{epEst}
Under the same assumptions of Theorem \ref{THM_m}, for any fixed value of $\theta\in[0,1]$, we have
\begin{equation}\label{alfaEst2}
|\epsilon(\theta,\Delta t)-\alpha|=\mathcal{O}(\Delta t).
\end{equation}
\end{theorem}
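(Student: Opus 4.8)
The plan is to follow exactly the strategy employed in the proof of Theorem \ref{nuEst}, namely to expand $\epsilon(\theta,\Delta t)$ as a power series in $\Delta t$ about $\Delta t=0$ and read off that its constant term equals $\alpha$, so that the remainder is of order $\Delta t$. The crucial structural observation is that, by \eqref{gamma_mil}, the amplification factor $\gamma(\theta,\Delta t)$ of the Milstein method differs from the Maruyama factor $\beta(\theta,\Delta t)$ only by the additive term $\frac{3\widetilde M\Delta t^2}{4(1-2\theta\mu\Delta t)}$, which is $\mathcal{O}(\Delta t^2)$. Hence the leading-order behaviour of $\gamma$ coincides with that of $\beta$, and the new Milstein contribution can influence only the $\mathcal{O}(\Delta t)$ coefficient of $\epsilon$, never its constant term.

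Concretely, I would first rewrite \eqref{gamma_mil} over a common denominator as $\gamma(\theta,\Delta t) = 1 + \frac{\alpha\Delta t + \big((1-\theta)^2 M + \frac34\widetilde M\big)\Delta t^2}{1-2\theta\mu\Delta t}$, and then expand $(1-2\theta\mu\Delta t)^{-1} = 1 + 2\theta\mu\Delta t + \mathcal{O}(\Delta t^2)$. This yields $\gamma(\theta,\Delta t) = 1 + \alpha\Delta t + \big((1-\theta)^2 M + \frac34\widetilde M + 2\alpha\theta\mu\big)\Delta t^2 + \mathcal{O}(\Delta t^3)$, confirming $\gamma(\theta,0)=1$ and $\partial_{\Delta t}\gamma|_{\Delta t=0}=\alpha$.

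Next I would insert this into \eqref{eps_mil}. Writing $u=\gamma-1$, the expansion $\ln(1+u)=u-\frac12 u^2+\mathcal{O}(u^3)$ together with $u^2=\alpha^2\Delta t^2+\mathcal{O}(\Delta t^3)$ gives $\ln\gamma(\theta,\Delta t)=\alpha\Delta t+\big((1-\theta)^2 M+\frac34\widetilde M+2\alpha\theta\mu-\frac{\alpha^2}{2}\big)\Delta t^2+\mathcal{O}(\Delta t^3)$. Dividing by $\Delta t$ produces $\epsilon(\theta,\Delta t)=\alpha+\big(2\alpha\theta\mu+(1-\theta)^2 M+\frac34\widetilde M-\frac{\alpha^2}{2}\big)\Delta t+\mathcal{O}(\Delta t^2)$, which is precisely the Maruyama expansion from Theorem \ref{nuEst} augmented by the single term $\frac34\widetilde M\,\Delta t$. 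The estimate \eqref{alfaEst2} then follows immediately.

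I do not anticipate a genuine obstacle: the computation is a routine Taylor expansion and is essentially a one-line corollary of the Maruyama analysis. The only point requiring mild care is verifying that the Milstein correction enters at order $\Delta t^2$ in $\gamma$ and therefore contributes only to the first-order coefficient of $\epsilon$ after taking the logarithm and dividing by $\Delta t$; once this is checked, the fact that the constant term is exactly $\alpha$ — and hence that $|\epsilon(\theta,\Delta t)-\alpha|=\mathcal{O}(\Delta t)$ — is automatic.
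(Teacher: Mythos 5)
Your proposal is correct and follows exactly the paper's own argument: a Taylor expansion of $\epsilon(\theta,\Delta t)$ in $\Delta t$ showing the constant term is $\alpha$, and your first-order coefficient $2\alpha\mu\theta+(1-\theta)^2M-\frac{\alpha^2}{2}+\frac{3\widetilde{M}}{4}$ matches the one stated in the paper. The only difference is that you spell out the intermediate steps (common denominator, geometric series, $\ln(1+u)$ expansion) that the paper leaves implicit.
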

\begin{proof}
By expanding $\epsilon(\theta,\Delta t)$ in \eqref{eps_mil} in power series of $\Delta t$, we obtain
$$\epsilon (\theta,\Delta t) = \alpha + \left(2\alpha\mu\theta  + (1-\theta)^2M - \frac{\alpha^2}{2}+\frac{3\widetilde{M}}4 \right)\Delta t + O(\Delta t^2),$$
leading to the thesis.
\end{proof}

\section{Mean-square contractivity}
According to Definition \ref{def1}, the nonlinear sto\-chastic system \eqref{equation} generates mean-square contractive solutions if $\alpha < 0$ in \eqref{exp}. A natural counterpart of this definition for a stochastic $\theta$-method is now given as follows. 
\begin{definition}\label{contDef}
Consider a nonlinear stochastic differential equation (\ref{equation}) satisfying assumptions (i)--(iii) given in Theorem \ref{thExp} and let $X_n$ and $Y_n$, $n\geq 0$, be two numerical solutions of  (\ref{equation}) computed by the $\theta$-methods (\ref{teta}) or (\ref{teta_m}). Then, the applied method is said to be generate mean-square contractive numerical solutions in a region $\mathcal{R}\subseteq\mathbb{R}^{+}$ if, for a fixed $\theta\in[0,1]$,
$$\nu(\theta,\Delta t)<0, \quad \forall\Delta t\in \mathcal{R}$$
for \eqref{teta}, being $\nu(\theta,\Delta t)$ the parameter in \eqref{contCond}, or
$$\epsilon(\theta,\Delta t)<0, \quad \forall\Delta t\in \mathcal{R}$$
for \eqref{teta_m}, where $\epsilon(\theta,\Delta t)$ is the parameter in \eqref{contCond2}.
\end{definition} 
\begin{definition}\label{uncondContr}
A stochastic $\theta$-method (\ref{teta}) or (\ref{teta_m}) is said unconditionally mean-square contractive if, for a given $\theta\in[0,1]$, $\mathcal{R}={\mathbb R}^+$.
\end{definition} 

As regards the $\theta$-Maruyama method \eqref{teta}, according to Definition \ref{contDef}, mean-square contractive numerical solutions are generated if 
$$0 < \beta(\theta,\Delta t)<1,$$
for any $\Delta t\in \mathcal{R}$, i.e.
\begin{equation}\label{cc1}
\mathcal{R}=\begin{cases}
\left(0,\displaystyle\frac{|\alpha|}{(1-\theta)^2 M}\right), &  \theta < 1,\\[3mm]
\mathbb{R}^+, &  \theta = 1. 
\end{cases}
\end{equation}

As a consequence, we have proved the following result for the $\theta$-Maruyama method with $\theta=1$, i.e., for the implicit Euler-Maruyama method
\begin{equation}\label{implEM}
X_{n+1} = X_n + \Delta t f(X_{n+1}) + g(X_n)\Delta W_n.
\end{equation}

\begin{theorem}\label{thUC}
For a given a nonlinear problem \eqref{equation} satisfying the assumptions (i)--(iii) given in Theorem \ref{thExp}, the implicit Euler-Maruyama method \eqref{implEM} is unconditionally mean-square contractive.
\end{theorem}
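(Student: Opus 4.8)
The plan is to unwind the two relevant definitions and reduce the statement to a sign analysis of the single scalar quantity $\beta(1,\Delta t)$. By Definition \ref{uncondContr}, unconditional mean-square contractivity of \eqref{implEM} means that the contractivity region equals $\mathbb{R}^+$, and by Definition \ref{contDef} specialized to $\theta=1$ this is the requirement $\nu(1,\Delta t)<0$ for every $\Delta t>0$. Since $\nu(\theta,\Delta t)=\frac{1}{\Delta t}\ln\beta(\theta,\Delta t)$ by \eqref{nu_mar} and $\Delta t>0$, the sign of $\nu(1,\Delta t)$ is that of $\ln\beta(1,\Delta t)$; hence the whole claim is equivalent to the two-sided bound $0<\beta(1,\Delta t)<1$ for all $\Delta t>0$. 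This is exactly the $\theta=1$ branch of \eqref{cc1}, so it remains to verify that branch directly.

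First I would specialize the expression \eqref{beta_mar} to $\theta=1$: the term $(1-\theta)^2 M\Delta t$ vanishes, leaving
\begin{equation*}
\beta(1,\Delta t)=1+\frac{\alpha\,\Delta t}{1-2\mu\Delta t}.
\end{equation*}
Before estimating this, I would record the sign information furnished by the hypotheses. From Theorem \ref{thExp} we have $\alpha=2\mu+L$ with $L>0$, so the contractivity assumption $\alpha<0$ forces $2\mu<-L<0$, i.e. $\mu<0$. Consequently, for every $\Delta t>0$ the implicit denominator satisfies $1-2\mu\Delta t>1>0$, and this uniform positivity is precisely what will make the conclusion hold without any restriction on $\Delta t$.

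Next I would prove the two inequalities separately. For the upper bound, the fraction above has negative numerator $\alpha\Delta t$ and positive denominator, hence is negative, so $\beta(1,\Delta t)<1$ at once. For the lower bound, I would combine over a common denominator and use $\alpha-2\mu=L$ to rewrite
\begin{equation*}
\beta(1,\Delta t)=\frac{1+(\alpha-2\mu)\Delta t}{1-2\mu\Delta t}=\frac{1+L\Delta t}{1-2\mu\Delta t},
\end{equation*}
whose numerator and denominator are both strictly positive, so $\beta(1,\Delta t)>0$. Together these give $0<\beta(1,\Delta t)<1$ for all $\Delta t>0$, hence $\nu(1,\Delta t)<0$ throughout $\mathbb{R}^+$, which is the assertion.

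I do not expect a genuinely hard step: the result is essentially a corollary of the stability inequality of Theorem \ref{THM} and the region computation \eqref{cc1}. The only point that needs care is the bookkeeping of signs, namely deducing $\mu<0$ from $\alpha<0$ and $L>0$ and then observing that this same sign keeps $1-2\mu\Delta t$ positive and bounded away from zero for arbitrarily large $\Delta t$. This uniform control of the implicit denominator is what upgrades the conditional contractivity of the general $\theta<1$ case into the unconditional statement for $\theta=1$.
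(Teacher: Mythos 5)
Your proposal is correct and follows essentially the same route as the paper, which obtains Theorem \ref{thUC} as an immediate consequence of the $\theta=1$ branch of the region computation \eqref{cc1}, i.e., of the condition $0<\beta(\theta,\Delta t)<1$. You merely make explicit the sign bookkeeping (deducing $\mu<0$ from $\alpha<0$ and $L>0$, and rewriting $\beta(1,\Delta t)=(1+L\Delta t)/(1-2\mu\Delta t)$) that the paper leaves implicit.
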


In other terms, the stochastic perturbation \eqref{implEM} of the deterministic implicit Euler method preserves its unconditional contractivity property \cite{hawa}.

In analogous way, as regards the $\theta$-Milstein method \eqref{teta_m}, Definition \ref{contDef} leads to 
$$0 < \gamma(\theta,\Delta t)<1,$$
for any $\Delta t\in \mathcal{R}$, i.e.
\begin{equation}\label{cc2}
\mathcal{R}=\begin{cases}
\left(0,\displaystyle\frac{4|\alpha|}{4(1-\theta)^2 M +3 \widetilde{M}}\right), &  \theta < 1,\\[5mm]
\left(0,\displaystyle\frac{4|\alpha|}{3\widetilde{M}}\right), &  \theta = 1. 
\end{cases}
\end{equation}

The computation of the regions $\mathcal{R}$ in \eqref{cc1} and \eqref{cc2} relies on the knowledge of the Lipschitz constant $L$ to the diffusion of \eqref{equation}, the one-sided Lipschitz constant $\mu$ of the drift, the constants $M$ and $\widetilde{M}$ defined by \eqref{M} and \eqref{Mtilde}, respectively. The estimation of the parameters $L$ and $\mu$ is typically required in global optimization algorithms, therefore we adopt a similar estimation strategy (see \cite{r8}) to make the region $\mathcal{R}$ fully computable. 

\newpage

\begin{center}
{\bf Algorithm 1: estimation of the Lipschitz constant $L$}
\end{center}

\bigskip

\textbf{Step 1}. We perform $P$ paths of the $\theta$-methods \eqref{teta} or \eqref{teta_m} and denote by $X_n^{i,j}$ the $i$-th component of the $j$-th realization of the solution $X_n$, $i=1,2,\ldots,d$, $j=1,2,\ldots,P$. Then, we compute 
\begin{equation}
a_i= \min_{j=1,\ldots,P}\min_{t_n\in\mathcal{I}_{\Delta t}}{X^{i,j}_n},  \quad \quad b_i = \max_{j=1,\ldots,P}\max_{t_n\in \mathcal{I}_{\Delta t}}{X^{i,j}_n},
\end{equation}
$i=1,2,\ldots,d$.\\

\textbf{Step 2}. We generate $Q$ couples of vectors 
$$x_k=\left[x_k^{1}, \ x_k^{2}, \ \ldots, \ x_k^{d}\right]\trx, \quad y_k=\left[y_k^{1}, \ y_k^{2}, \ \ldots, \ y_k^{d}\right]\trx,$$ 
with $k=1,2,\ldots Q$, such that $(x_k^i,y_k^i)$ is uniformly distributed in $[a_i,b_i] \times[a_i,b_i]$, $i=1,2,\ldots,d$. \\

\textbf{Step 3}. We compute
\begin{equation}
\notag
s_k = \frac{|g(x_k)-g(y_k)|^2}{|x_k - y_k |^2}, \quad\quad k = 1,2,\ldots,Q.\\
\end{equation}

\textbf{Step 4}. We assume as estimate of $L$ the value of $\max\{s_1,...,s_Q\}$.\\

For a detailed accuracy analysis of the algorithm, we refer to \cite{r8}. An analogous algorithm for the estimate of the one-sided Lipschitz constant $\mu$ is obtained in a similar way.

\bigskip

\begin{center}
{\bf Algorithm 2: estimation of the one-sided Lipschitz constant $\mu$}
\end{center}

\bigskip

\textbf{Step 1}. See Step 1 of Algorithm 1.\\

\textbf{Step 2}. See Step 2 of Algorithm 1.\\

\textbf{Step 3}. We compute
\begin{equation}
\notag
s_k = \frac{<x_k-y_k,f(x_k)-f(y_k)>}{|x_k - y_k |^2}, \quad\quad k = 1,2,\ldots,Q.\\
\end{equation}

\textbf{Step 4}. We assume as estimate of $\mu$ the value of $\min\{s_1,...,s_Q\}$.\\

Clearly the estimates of $M$ in \eqref{M} and $\widetilde{M}$ in \eqref{Mtilde} is straightforward from their definitions, once $P$ realizations of the numerical solution are computed.

\section{Numerical experiments} In this section, we present the numerical evidence arising from the application of the $\theta$-Maruyama \eqref{teta} and the $\theta$-Milstein \eqref{teta_m} methods to a selection of nonlinear problems generating mean-square contractive solutions according to Definition \ref{def1}. We confirm the sharpness of the estimates provided in Section 4 for the stepsize $\Delta t$ in order to generate mean-square contractive numerical solutions according to Definition \ref{contDef}. The expected values computed in the remainder of this section always rely on the numerical solutions over $P=2000$ paths.

\medskip

{\em Problem 1}.  We consider the scalar SDE \eqref{equation} with 
\begin{equation}
\notag
f(X(t)) = -4X(t)-X(t)^3, \quad \quad g(X(t)) = X(t)
\end{equation}
and initial data $X_0=1$ and $Y_0=0$, used as test example in \cite{r6}. For this problem the constants $L$ and $\mu$ are given by $L=1$ and $\mu=-4$, so $\alpha=-7$. Then, according to Theorem \ref{thExp}, this problem generates mean-square contractive  solutions. Moreover, the values of $M$ in \eqref{M} and $\widetilde{M}$ in \eqref{Mtilde} are 16 and 1, respectively. We consider the following $A$-stable methods \cite{r4}: 

\medskip

\begin{itemize}
\item the stochastic trapezoidal methods, i.e., the $\theta$-Maruyama methods \eqref{teta} with $\theta=1/2$. In this case (\ref{cc1}) yields 
$$\mathcal{R}=\left(0, \ \frac{7}{4}\right).$$
The corresponding estimate on $\Delta t$ is confirmed in Figure \ref{mar_pol_05}, where the time-evolution of the mean-square deviation $\mathbb{E}|X_n-Y_n|^2$ in logarithmic scale is depicted for various values of $\Delta t$. It is visible that, the more $\Delta t$ decreases, the more the numerical slope $\nu(\frac12,\Delta t)$ in \eqref{contCond} tends to the exact slope $\alpha$ in \eqref{exp}. For values of $\Delta t>\frac{7}{4}$, the mean-square deviation does not exponentially decay;

\begin{figure}
\centering
\scalebox{0.5}{\includegraphics{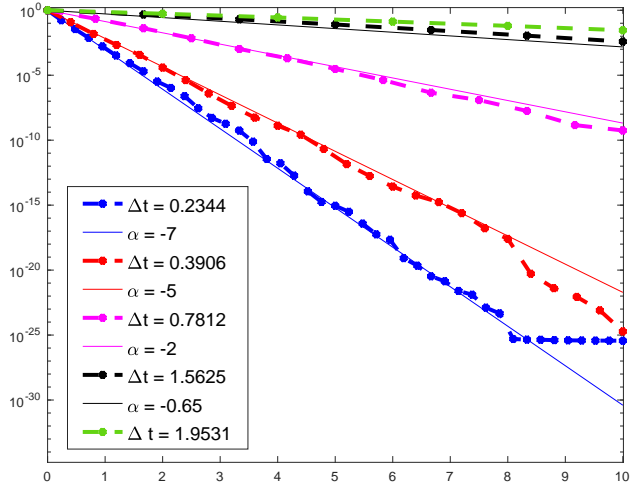}}
          \vspace{-0.4cm} \caption{Mean-square deviations over 2000 paths for the stochastic trapezoidal method applied to Problem 1.}\label{mar_pol_05}
\end{figure}

\item the stochastic implicit Euler \eqref{implEM}, that is unconditionally mean-square contractive, according to Theorem \ref{thUC}. The behaviour depicted in Figure \ref{mar_pol_1} confirms the theoretical result on the unconditional contractivity of \eqref{implEM}. Indeed, the mean-square deviation is always exponentially decaying and its slope tends to the exact slope as $\Delta t$ decreases;
 \begin{figure}
\centering
         \scalebox{0.5}{\includegraphics{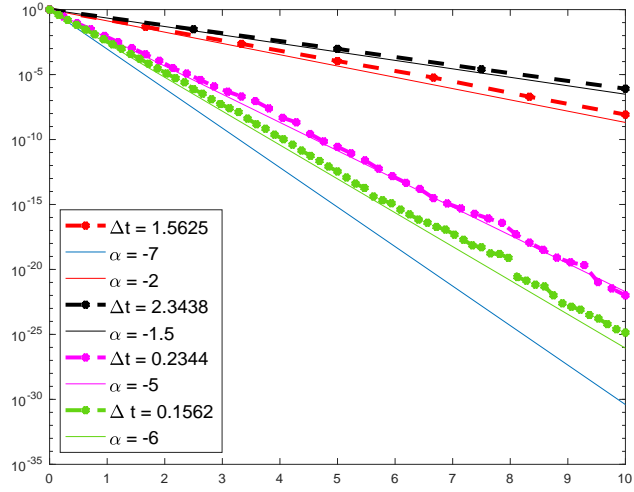}}
          \vspace{-0.4cm}\caption{Mean-square deviations over 2000 paths for the stochastic implicit Euler method \eqref{implEM}, applied to Problem 1.}\label{mar_pol_1}
\end{figure}
\item the $\theta$-Milstein method \eqref{teta_m} with $\theta=1/2$.  For this method, (\ref{cc2}) leads to 
$$\mathcal{R}=\left(0, \ \frac{14}9\right).$$
Also in this case, as shown in Figure \ref{mil_pol_05}, the theoretical estimate of $\Delta t$ is confirmed by the numerical evidence. As already proved in Theorem \ref{epEst}, the numerical slope $\epsilon(\frac12,\Delta t)$ in \eqref{contCond2} tends to the exact slope $\alpha$ in \eqref{exp}. For values of $\Delta t>\frac{14}{9}$, the mean-square deviation does not exponentially decay.
 \begin{figure}
\centering
         \scalebox{0.5}{\includegraphics{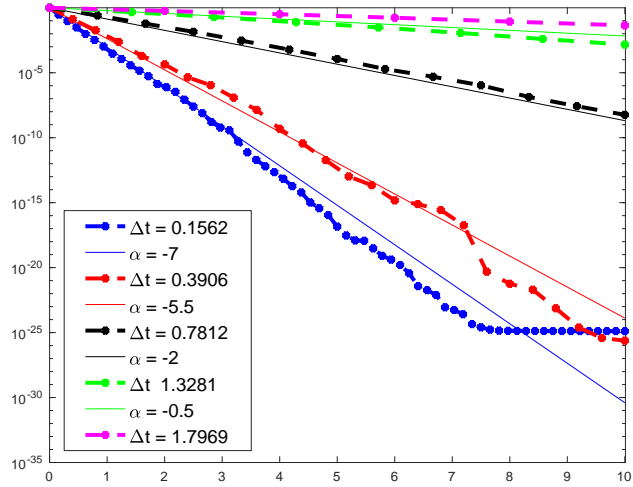}}
          \vspace{-0.4cm}\caption{Mean-square deviations over 2000 paths for the $\theta$-Milstein method \eqref{teta_m} with $\theta=1/2$, applied to Problem 1.}\label{mil_pol_05}
\end{figure}
\end{itemize}

\bigskip
{\em Problem 2}.  Let us consider the scalar nonlinear SDE \eqref{equation} with 
\begin{equation}
\notag
f(X(t)) = -5X(t), \quad \quad g(X(t)) = \sin(X(t))
\end{equation}
and initial data $X_0=1$ and $Y_0=0$. For this problem the constants $L$ and $\mu$ are given by $L=1$ and $\mu=-5$, so $\alpha=-9$. Then, according to Theorem \ref{thExp}, the problem generates mean-square contractive solutions. Moreover, the values of $M$ in \eqref{M} and $\widetilde{M}$ in \eqref{Mtilde} are 25 and 1, respectively. We consider the following $A$-stable methods: 

\medskip

\begin{itemize}
\item the stochastic trapezoidal methods, i.e., the $\theta$-Maruyama methods \eqref{teta} with $\theta=1/2$. In this case (\ref{cc1}) yields 
$$\mathcal{R}=\left(0, \ \frac{36}{25}\right).$$
The corresponding estimate on $\Delta t$ is confirmed in Figure \ref{mar_nonPol_05}, as well as the convergence of the numerical slope $\nu(\frac12,\Delta t)$ in \eqref{contCond} to the exact slope $\alpha$ in \eqref{exp}. Also in this case, for values of $\Delta t>\frac{36}{25}$, the mean-square deviation does not exponentially decay;
\begin{figure}
\centering
         \scalebox{0.5}{\includegraphics{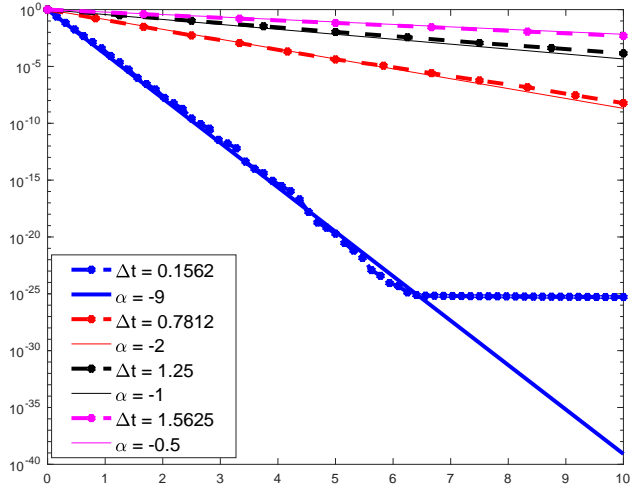}}
          \vspace{-0.4cm}\caption{Mean-square deviations over 2000 paths for the stochastic trapezoidal method applied to Problem 2.}\label{mar_nonPol_05}
\end{figure}
\item the $\theta$-Maruyama methods \eqref{teta} with $\theta=13/20$. In this case, according to (\ref{cc1}), we have  
$$\mathcal{R}=\left(0, \ \frac{144}{49}\right).$$
Also in this case the numerical evidence reported in Figure \ref{mar_nonPol_065} confirms the theoretical results;
\begin{figure}
\centering
         \scalebox{0.5}{\includegraphics{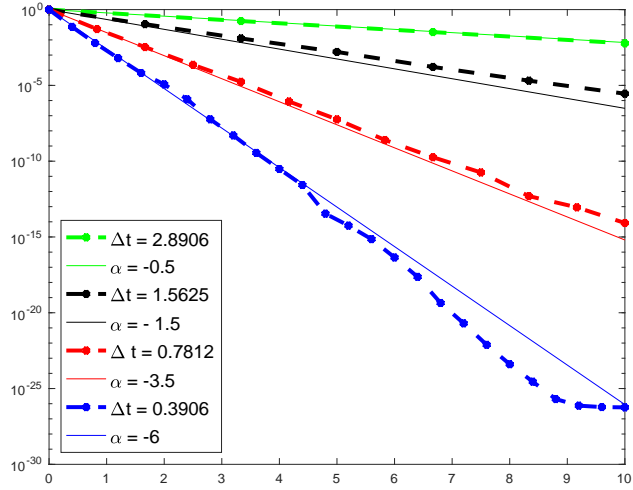}}
          \caption{Mean-square deviations over 2000 paths for the $\theta$-Maruyama method with $\theta=13/20$ applied to Problem 2.}.\label{mar_nonPol_065}
\end{figure}
\item the $\theta$-Milstein method \eqref{teta_m} with $\theta=13/20$.  For this method, (\ref{cc2}) leads to 
$$\mathcal{R}=\left(0, \ \frac{48}{19}\right).$$
The numerical evidence, confirming the theoretical results, is shown in Figure \ref{mil_nonPol_065}.
\begin{figure}
\centering
         \scalebox{0.5}{\includegraphics{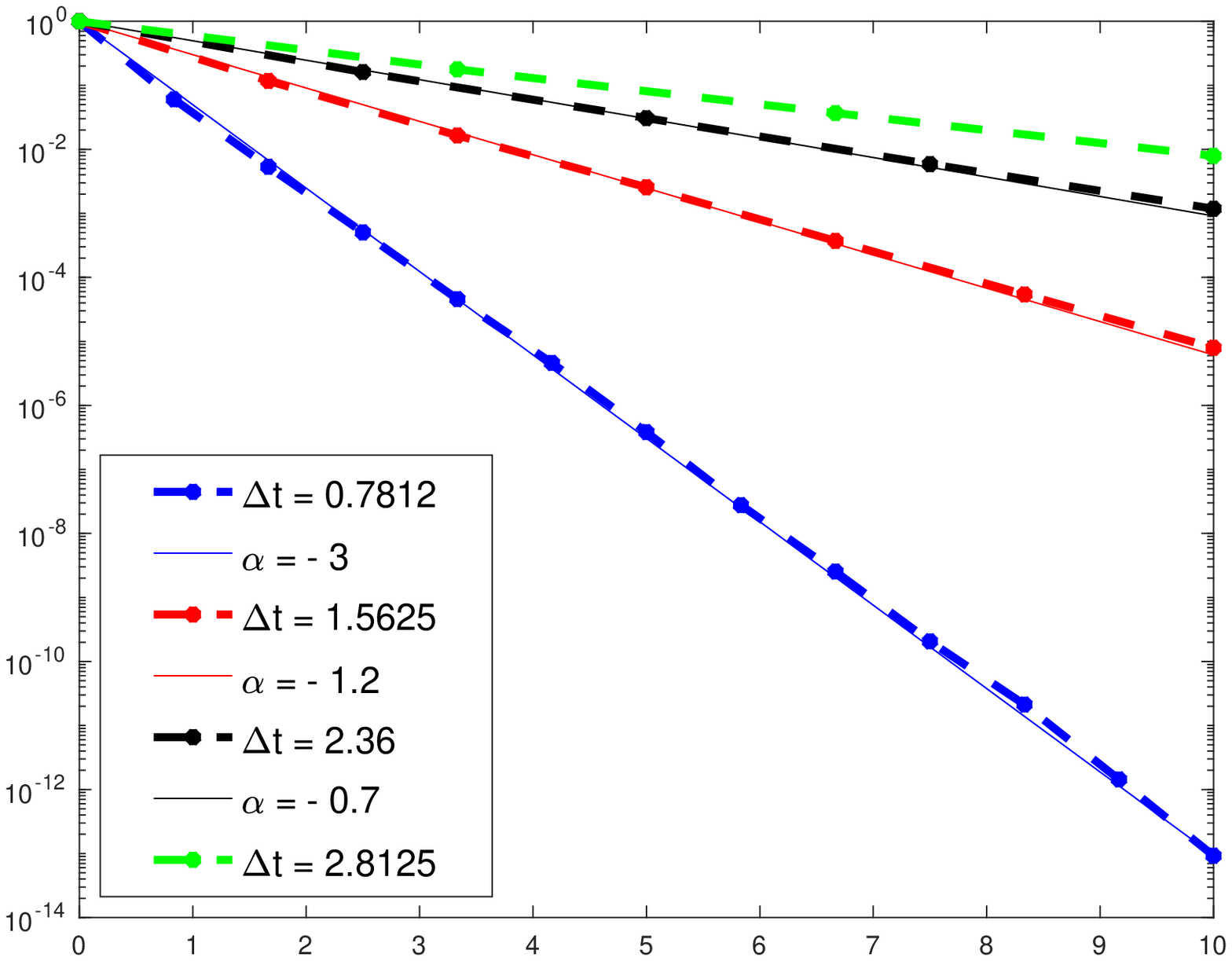}}
          \caption{Mean-square deviations over 2000 paths for the $\theta$-Milstein method \eqref{teta_m} with $\theta=13/20$, applied to Problem 2.}\label{mil_nonPol_065}
\end{figure}
\end{itemize} 
{\em Problem 3}.  We finally consider the nonlinear system of SDEs with
 \begin{equation}
 \notag
 f(X(t))= -4 \left[\begin{aligned} &\sin(X_1(t))  \\[1mm] &\sin(X_2(t))
\end{aligned}\right],
\quad \quad g(X(t)) = \frac{1}{7} \left[ \begin{array}{cc} X_1(t)  \ & \phantom{-}\displaystyle\frac{3}{2}X_2(t)  \\[5mm] \displaystyle\frac{5}{2}X_1(t) \ & -\displaystyle\frac{1}{2}X_2(t)
\end{array}\right].
 \end{equation}
and initial data $X_0=[1 \quad 1]\trx$ and $Y_0=[0 \quad 0]\trx$. For this problem the constants $L$ and $\mu$ are estimated as $L=0.148$ and $\mu= - 3.56$, so $\alpha\approx - 7.5$ and, as a consequence, the problem generates mean-square contractive solutions. Moreover, the value of $M$ in \eqref{M} is equal to 16. Also for this problem, we consider the following $A$-stable methods: 
 
 \medskip
 
\begin{itemize}
\item the stochastic trapezoidal methods, i.e., the $\theta$-Maruyama methods \eqref{teta} with $\theta=1/2$. In this case (\ref{cc1}) yields 
$$\mathcal{R}=\left(0, \ 1.1875\right).$$
This estimate is confirmed in Figure \ref{mar_sis_05}, as well as the exponential decay of the mean-square deviation with slope tending to the exact slope as $\Delta t$ decreases;
\begin{figure}
\centering
         \scalebox{0.4}{\includegraphics{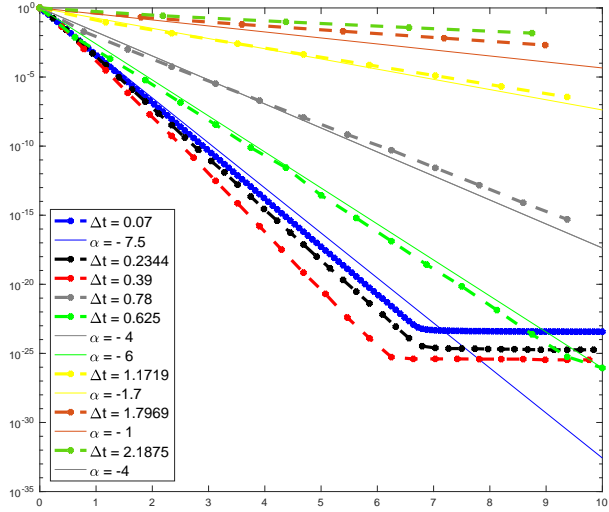}}
          \vspace{-0.4cm}\caption{Mean-square deviations over 2000 paths for the stochastic trapezoidal method applied to Problem 3.}\label{mar_sis_05}
\end{figure}
\item the stochastic implicit Euler \eqref{implEM}, whose unconditional mean-square contractivity is confirmed by the numerical evidence reported in Figure \ref{mar_sis_1}.
 \begin{figure}
\centering
         \scalebox{0.5}{\includegraphics{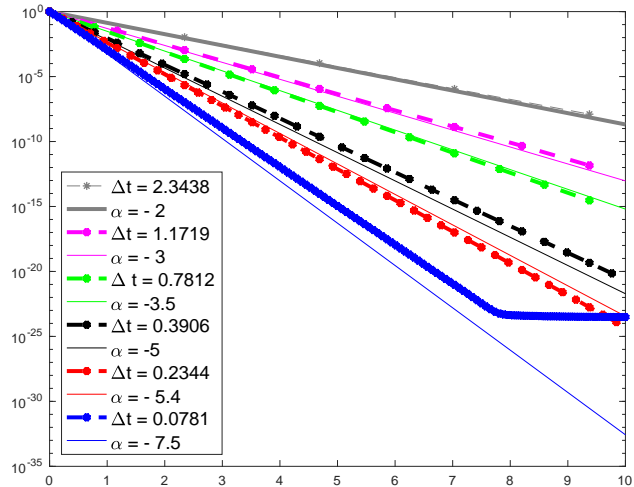}}
          \vspace{-0.4cm}\caption{Mean-square deviations over 2000 paths for the stochastic implicit Euler method \eqref{implEM}, applied to Problem 3.}\label{mar_sis_1}
\end{figure}
\end{itemize}

\section{Conclusions} 
In this paper we have analyzed featured nonlinear stability properties of the stochastic $\theta$-Maruyama \eqref{teta} and $\theta$-Milstein \eqref{teta_m} methods for nonlinear SDEs \eqref{equation} satisfying the assumptions of Theorem \ref{thExp}, hence fulfilling an exponential mean-square stability inequality of type \eqref{exp}. According to Definition \ref{def1}, if the parameter $\alpha$ in (\ref{exp}) is negative, the problem is said to generate exponential mean-square contractive solutions. We have translated this feature of the continuous problem into stepsize restrictions guaranteeing that the exponential mean-square contractive behaviour is also visible numerically. Such restrictions depend on characteristic parameters of the problem (e.g., the Lipschitz constant of the diffusion term and the one-sided Lipschitz constant of the drift in \eqref{equation}) that have been estimated through the algorithms presented in Section 4. The overall developed theory provides sharp stepsize restrictions that have also been confirmed on a selection of scalar and vector valued problems. 
Future issues of this research regard the analysis of mean-square contractivity properties for stochastic Runge-Kutta methods, eventually leading to a notion of stochastic algebraic stability, in analogy with a similar features occurirng in the deterministic case.

\end{document}